\documentclass[12pt,cd]{amsart}
\usepackage{latexsym}
\usepackage{amsmath,amsfonts,amssymb,amsthm,mathtools,amscd,bm,mathabx}
\usepackage{graphics}
\usepackage{csquotes}
\usepackage{blindtext}
\usepackage{xcolor}

 2

\usepackage{hyperref}
\newtheorem{theorem}{Theorem}[section]
\newtheorem{prop}[theorem]{Proposition}

\newtheorem{cor}[theorem]{Corollary}
\newtheorem{definition}[theorem]{Definition}

\newtheorem{remark}[theorem]{Remark}
\numberwithin{equation}{section}

\title[Superposition of Harmonic Surfaces]{Superposition of Harmonic Surfaces: Helical Motifs in Lamellar Structures}
\author{Priyank Vasu}
\address{Department of Mathematics, Indian Institute of Technology Patna, Bihta, Patna-801106, Bihar, India}

\thanks{Priyank Vasu: \href{mailto:priyank_2121ma16@iitp.ac.in}{priyank\_2121ma16@iitp.ac.in}}

\subjclass[2020]{53C42,31A05,53A10}
\keywords{Harmonic immersions, Harmonic surfaces,  Multipole expansion, Twist grain boundary, Helical Motifs}

\begin{document}
\maketitle
\begin{abstract}
    We study harmonic surfaces in $\mathbb{R}^3$ through the framework of harmonic Enneper immersions and prove a superposition principle for such surfaces. We prove that minimal and maximal surfaces admit a decomposition into harmonic components. Applications include the construction of finite and infinite configurations of helical motifs, an asymptotic analysis via multipole expansions, and the modelling of twist grain boundary phases in lamellar systems.
\end{abstract}

%==================================================================================
\section{Introduction}

An immersion $X: \Omega \to \mathbb{R}^3$ is called harmonic if its coordinate functions are harmonic, i.e.\ $\Delta X = 0$. Minimal surfaces form a distinguished subclass of harmonic immersions corresponding to the conformal case, where the Weierstrass representation provides a harmonic parametrisation. In general, however, harmonic immersions need not be conformal (see \eqref{example}), and thus extend the classical theory of minimal surfaces. The study of harmonic immersions in $\mathbb{R}^3$ dates back to the work of Klotz \cite{Klotz1967,Klotz1968,KlotzMilnor1979,KlotzMilnor1980}. Recently, harmonic immersions have been studied in the context of quasiconformal mappings, Gauss map properties, and Weierstrass-type representations \cite{AlarconLopez2013,Kalaj2013,Vasu2026}. Superposition principles for minimal and maximal surfaces have been investigated in \cite{Dorff2012} and \cite{Paul2024}, respectively; in this work, we extend these ideas to the broader class of harmonic immersions.
 
Harmonic surfaces arise naturally in a wide range of physical and biological systems. Recent studies have revealed the presence of helical motifs within the lamellar structures of the endoplasmic reticulum,  plant photosynthetic membrane and cyanobacterial thylakoids \cite{Terasaki2013, Mustardy2008, Liberton2011, Bussi2019}. Twist grain boundaries phases, composed of infinitely many helical motifs of a single handedness arranged along a line, exact theoretical descriptions and are well understood \cite{Kamien1999, Santangelo2006, Matsumoto2017}.

Due to their close connection with minimal surface theory, such problems have traditionally been studied primarily from the viewpoint of minimal surfaces \cite{Powers2002, Kamien1999, daSilva2021}. Santangelo and Kamien introduced a different perspective in \cite{Santangelo2007}, in which they investigated twist grain boundaries in liquid crystals using harmonic surfaces rather than minimal ones. In particular, Kamien \cite{Kamien2001} showed, using a Ramanujan identity, that an infinite superposition of harmonic graphs yields a Scherk's first surface, a minimal and harmonic surface.

Motivated by this approach, we study harmonic surfaces via their Enneper representations. Exploiting the linearity of harmonic functions, we derive a superposition principle for harmonic immersions, as well as a superposition result for harmonic graphs. We further establish a connection between harmonic surfaces and minimal or maximal surfaces that can be decomposed into a superposition of two harmonic immersions, and illustrate several applications arising in geometric and physical contexts.

\section{Harmonic Enneper immersions}
\label{Sect::EnneperImmersions}

%=============================================================
Let $\Omega \subset \mathbb{C}$ be a connected open set. We identify $\mathbb{R}^2$ with $\mathbb{C}$ via $z = x + iy$, where $(x,y) \in \mathbb{R}^2$, and denote by $(r,\theta)$ the polar coordinates in $\mathbb{C}$.

\begin{definition}
A map $X = (X_1, X_2, X_3) : \Omega \to \mathbb{R}^3$ is called a \emph{harmonic immersion}
if:
\begin{enumerate}
    \item $X$ is an immersion,
    \item each component function $X_j$, $j=1,2,3$, is harmonic on $\Omega$.
\end{enumerate}
A subset $\mathcal{S} \subset \mathbb{R}^3$ is called a \emph{harmonic surface} if there exists
a harmonic immersion $X : \Omega \to \mathbb{R}^3$ such that $\mathcal{S} = X(\Omega)$.
In this case, $X$ is called a \emph{harmonic parametrisation} of $\mathcal{S}$.
\end{definition}

A harmonic immersion may admit multiple harmonic parametrisations. For example,
\begin{equation} \label{example}
\begin{aligned}
    Y_1 &: \mathbb{C} \to \mathbb{R}^3, 
    &Y_1(z) &= \operatorname{Re}\big(e^z,\, z e^z,\, i z \big), \\
    Y_2 &: \{z \in \mathbb{C} : \operatorname{Re}(z) > 0\} \to \mathbb{R}^3, 
    &Y_2(z) &= \operatorname{Re}\big(\sinh z,\, i \cosh z,\, i z \big),
\end{aligned}
\end{equation}
define two distinct harmonic parametrisations of the same surface, namely a half-helicoid.

We recall the Wirtinger operators $\partial_z := \frac{1}{2}\left(\frac{\partial}{\partial x} - i \frac{\partial}{\partial y}\right),
\,
\partial_{\bar z} := \frac{1}{2}\left(\frac{\partial}{\partial x} + i \frac{\partial}{\partial y}\right),$ where $z = x + iy \in \Omega$. We also use the notation $X_x = \frac{\partial X}{\partial x}, 
X_y = \frac{\partial X}{\partial y}, 
X_z = \partial_z X, 
X_{\bar z} = \partial_{\bar z} X.$ Let $X = (X_1, X_2, X_3)$ be a harmonic immersion. We define the complex derivative vector
\[
\Phi = (\phi_1, \phi_2, \phi_3) := (\partial_z X_1, \partial_z X_2, \partial_z X_3).
\]
%=============================================================

\begin{definition}\label{definition Weierstrass representation} \cite{AlarconLopez2013}
The pair $(\Omega, \Phi)$ is called the Weierstrass representation of $X$. 
The holomorphic coefficient of the quadratic differential $\mathfrak{H}:= \sum_{j=1}^3 \phi_j^2$ on $\Omega$ is called the Hopf differential of the harmonic immersion $X$.
\end{definition}

\begin{remark}
If the Hopf differential satisfies $\mathfrak{H} \equiv 0$ on $\Omega$, then the corresponding harmonic immersion $X$ is minimal. 
Notice that $Y_2$ is minimal, whereas $Y_1$ is not in \eqref{example}.
\end{remark}

\begin{theorem}\cite{Duren2004}
If $f = u + iv$ is a harmonic function on $\Omega$, then it can be expressed as $f = L + \overline{P} $, where $L$ and $P$ are holomorphic functions on $\Omega$.
\end{theorem}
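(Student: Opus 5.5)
The plan is to split $f$ into its $\partial_z$ part and its $\partial_{\bar z}$ part using the Wirtinger operators introduced above. Throughout I will use the factorisation $\Delta = 4\,\partial_z\partial_{\bar z} = 4\,\partial_{\bar z}\partial_z$. This is legitimate because harmonic functions are $C^\infty$ (indeed real-analytic), so mixed partial derivatives commute. Harmonicity of $f=u+iv$ is understood componentwise, $\Delta u=\Delta v=0$, so $\Delta f = 0$.

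\emph{Step 1.} Set $h := f_z = \partial_z f$. Then $\partial_{\bar z} h = \tfrac14 \Delta f = 0$. Since $h$ is smooth and satisfies the Cauchy--Riemann equations, $h$ is holomorphic on $\Omega$.

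\emph{Step 2.} Choose a holomorphic primitive $L$ of $h$ on $\Omega$, so that $L' = h$. This is the step I expect to be the main obstacle. A primitive exists when $\Omega$ is simply connected: fix $z_0\in\Omega$ and set $L(z)=\int_{z_0}^{z} h(\zeta)\,d\zeta$, which is path-independent by Cauchy's theorem. On a general connected open set the statement is false. For example, $f=\log|z|$ on $\mathbb{C}\setminus\{0\}$ has $f_z = 1/(2z)$, which has no single-valued primitive. So I would read the theorem as being stated for simply connected $\Omega$, or as a local statement on discs in $\Omega$, following \cite{Duren2004}. That convention is harmless for the later Enneper-type representations, which are written on simply connected domains or passed to the universal cover.

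\emph{Step 3.} Put $g := f - L$. Since $L$ is holomorphic, $\partial_z L = L'$, and hence $\partial_z g = f_z - L' = 0$. Conjugating gives $\partial_{\bar z}\,\overline{g} = \overline{\partial_z g} = 0$. Therefore $P := \overline{g}$ is holomorphic, and $f = L + \overline{P}$, as claimed.

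Finally, $L$ and $P$ are unique up to an additive constant $c$, via $L\mapsto L+c$ and $P\mapsto P-\bar c$. Uniqueness holds because a function that is both holomorphic and antiholomorphic is constant on the connected set $\Omega$. The same argument applied to each real coordinate function $X_j$ gives $X_j = \operatorname{Re}(L_j + P_j)$, which is the form in which the decomposition will be used for harmonic immersions.
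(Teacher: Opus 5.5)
Your argument is correct. The paper gives no proof of this statement; it only cites Duren, where $\Omega$ is assumed simply connected. So your caveat is not a defect in your proof but a genuine correction to the statement as printed, in which $\Omega$ is merely a connected open set. Your counterexample is sound: applying $\partial_z$ to $\log|z| = L + \overline{P}$ would force $L' = 1/(2z)$ on $\mathbb{C}\setminus\{0\}$, whose integral around the origin is $\pi i \neq 0$. This reading matches how the paper actually uses the result. It fixes a branch of $\log z$ on a simply connected domain in its examples, and it asserts the Enneper representation only locally.

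The standard argument behind the citation takes harmonic conjugates to write $u = \operatorname{Re} F$ and $v = \operatorname{Re} G$, then sets $L = \tfrac12(F + iG)$ and $P = \tfrac12(F - iG)$. Your route instead works with the complex-valued $f$ directly and needs a single primitive rather than two conjugates. The two are equivalent in substance, since a conjugate of $u$ is itself built by integrating $2u_z = u_x - iu_y$. Yours has the advantage of making the obstruction explicit, because your Step 3 uses no topology. On any domain, $f = L + \overline{P}$ is possible exactly when the holomorphic function $f_z$ has a primitive.

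One small correction to your closing remark: the paper does not apply the decomposition coordinate by coordinate. It applies it to the complex-valued planar part $f = X_1 + iX_2$ of $X = (f,h)$ and keeps the height $h$ as a real harmonic function.
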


An orientation-preserving smooth mapping \( k: \Omega \to \Omega' \) between two domains in \( \mathbb{C} \) is called \textit{quasiconformal} if \( |\nu_k| < 1 \), where \( \nu_k \) is called the analytic dilatation (or the second Beltrami coefficient) of  \( k \), given by $\nu_k:= \frac{\bar{k}_{\bar{z}}}{ k_z}$ (see \cite{Duren2004}).

%==========================================================================================

In general, it is not easy to determine when a harmonic map 
\( X : \Omega \to \mathbb{R}^3 \) is an immersion. This constitutes one of the main difficulties in constructing such maps. To address this issue, we introduce the notion of \emph{harmonic Enneper immersions}, as presented in \cite{Vasu2026}.

First, we identify $\mathbb{R}^3 \equiv \mathbb{C} \times \mathbb{R}$. Now, let \( h : \Omega \to \mathbb{R} \) and \( f : \Omega \to \mathbb{C} \) be harmonic functions. Then \( f \) admits a decomposition of the form 
\[
f(z) = L(z) + \overline{P(z)},
\]
where \( L \) and \( P \) are holomorphic functions on \( \Omega \). We define a map
\begin{equation}\label{Eq::EnneperGraph}
    X(z) := (f(z), h(z)) = \bigl(L(z) + \overline{P(z)},\, h(z)\bigr), \quad z \in \Omega.
\end{equation}
The map \( X \) is a harmonic immersion with Hopf differential \( \mathfrak{H} \) provided that \( L \) and \( P \) satisfy
\begin{equation}\label{Eq: immersion}
    L_z P_z + (h_z)^2 = \mathfrak{H}, 
    \qquad \text{and} \qquad 
    |L_z| \ne |P_z|.
\end{equation}
The second condition ensures that \( X \) is an immersion. We refer to such a map $X = (L + \overline{P}, h)$ as a \emph{harmonic Enneper immersion}. Conversely, every harmonic immersion can be locally represented in this form. This representation simplifies the verification of the immersion condition. See \cite{Vasu2026} for a proof.
%===================================================================

For harmonic functions, since the Laplacian is linear on $C^2(\Omega)$ (the space of twice continuously differentiable functions on $\Omega$), any linear combination of harmonic functions on $\Omega$ is again harmonic. However, a linear combination of harmonic immersions is not, in general, a harmonic immersion. For harmonic Enneper immersions, we have the following result.

\begin{theorem}(Superposition principle)\label{Superposition principle}
For each $i=1,\dots,n$, let $X_i=(f_i,h_i):\Omega \to \mathbb{C} \times \mathbb{R}$ be a harmonic Enneper immersion, where $f_i:\Omega\to\mathbb{C}$ and $h_i:\Omega\to\mathbb{R}$ are harmonic functions. Assume that all $f_i$ have the same complex dilatation $\nu=\frac{\bar{(f_i)}_{\bar{z}}}{ (f_i)_z}$, and  let $a_i, b_i \in \mathbb{R}$ be constants such that  $\sum_{i=1}^n a_i f_i \neq 0$. Then the map $X:\Omega \to \mathbb{C} \times \mathbb{R}$ defined by
\[
X = \sum_{i=1}^n X_i
:= \left(\sum_{i=1}^n a_i f_i,\ \sum_{i=1}^n b_i h_i\right)
\]
is a harmonic Enneper immersion on $\Omega$. Moreover, writing $f_i = L_i + \overline{P_i}$ with $L_i, P_i$ holomorphic, we have
\[
X = \left(\sum_{i=1}^n a_i L_i + \sum_{i=1}^n a_i \overline{P_i},\ \sum_{i=1}^n b_i h_i\right).
\]
\end{theorem}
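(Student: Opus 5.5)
Since harmonicity is preserved under real linear combinations, the only substance is that $X$ remains an immersion, i.e.\ $\left|\sum a_i (L_i)_z\right| \neq \left|\sum a_i (P_i)_z\right|$. The plan is to write each $f_i = L_i + \overline{P_i}$ and express everything through the common dilatation $\nu$.

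First, harmonicity. Each $f_i$ and $h_i$ is harmonic and the $a_i, b_i$ are real, so $f := \sum a_i f_i$ and $h := \sum b_i h_i$ are harmonic. Moreover $f = L + \overline{P}$ with $L := \sum a_i L_i$ and $P := \sum a_i P_i$ holomorphic; here we use $\overline{a_i P_i} = a_i \overline{P_i}$ because $a_i$ is real. This already gives the displayed formula in the statement.

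Next, the immersion condition of \eqref{Eq: immersion}. Since $\bar{(f_i)}_{\bar z} = \overline{(P_i)_z}$ and $(f_i)_z = (L_i)_z$, the hypothesis reads $\overline{(P_i)_z} = \nu\,(L_i)_z$ for all $i$. Summing with real weights gives
\[
\overline{P_z} = \sum_{i=1}^n a_i \overline{(P_i)_z} = \nu \sum_{i=1}^n a_i (L_i)_z = \nu L_z .
\]
Because each $X_i$ is an immersion with $|(L_i)_z| \neq |(P_i)_z|$, and $\Omega$ is connected, one expects $|\nu| \neq 1$ throughout. In the orientation-preserving (quasiconformal) normalisation of the paper we have $|\nu| < 1$ on $\Omega$. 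Then $|P_z| = |\nu|\,|L_z| < |L_z|$ wherever $L_z \neq 0$. The Hopf differential of $X$ is simply defined as $L_z P_z + (h_z)^2$, which is holomorphic, so the first condition in \eqref{Eq: immersion} holds automatically.

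The main obstacle is the set where $L_z = 0$. There $P_z = 0$ as well, so $f$ is degenerate and the horizontal part fails to be an immersion. The hypothesis $\sum a_i f_i \neq 0$ is presumably meant to exclude this, so I would interpret it as $L_z$ being nonvanishing (or $f$ being non-constant). Then:

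1. If $L_z \not\equiv 0$, its zeros are isolated, and $X$ is an immersion off them. At a zero one could still have an immersion via $h$, since the rank can be recovered from $h_x, h_y$.
2. To get the strict inequality everywhere, I would state the needed nondegeneracy explicitly, i.e.\ assume $\sum a_i (L_i)_z \neq 0$ on $\Omega$, which is what the condition $\sum a_i f_i \neq 0$ should encode.
3. Finally, if $|\nu| > 1$ instead (the orientation-reversing case), the same argument applies with the roles of $L$ and $P$ swapped.
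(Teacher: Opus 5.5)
Your argument is the paper's: set $L=\sum a_iL_i$, $P=\sum a_iP_i$, and use that the $a_i$ are real to pass the common dilatation to the sum. One notational slip: the paper's $\nu$ is $\overline{(f_i)_{\bar z}}/(f_i)_z=P_z/L_z$, not $\overline{P_z}/L_z$ as you wrote, but only $|\nu|$ enters, so nothing changes. You go further than the paper at the last step, which the paper only asserts, and your worry there is justified. The hypothesis $\sum a_i f_i\neq 0$ does not give $L_z\neq 0$. Take $f_1=z$, $f_2=z^2$, $a_1=a_2=1$, $b_i=0$ on a small disc about $-\tfrac12$: there $f=z+z^2$ is nonconstant and zero-free, yet $L_z=1+2z$ vanishes at $-\tfrac12$, so $X$ is not an immersion. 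Your strengthened hypothesis, $\sum a_i(L_i)_z\neq 0$ on $\Omega$ (or $\sum a_i(P_i)_z\neq 0$ when $|\nu|>1$), is therefore genuinely needed for the statement to hold.
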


%===================================================================

\begin{proof}
For each $i=1,\dots,n$, let $X_i=(f_i,h_i)$ be a harmonic Enneper immersion, where $f_i=L_i+\overline{P_i}$ with $L_i$ and $P_i$ holomorphic on $\Omega$. The projection of $X_i$ onto the \(xy\)-plane is the harmonic mapping \(f_i\), and its complex dilatation is $\nu_{f_i}=\nu \text{ for all } i=1,\dots,n.$
Now define
\[
f:=\sum_{i=1}^n a_i f_i
\]
with \(a_i\in \mathbb{R}\). Then
\[
f=\sum_{i=1}^n a_iL_i+\overline{\sum_{i=1}^n a_iP_i}.
\]
Set
\[
L:=\sum_{i=1}^n a_iL_i, \qquad P:=\sum_{i=1}^n a_iP_i.
\]
Since each \(L_i\) and \(P_i\) is holomorphic on \(\Omega\), it follows that \(L\) and \(P\) are also holomorphic on \(\Omega\), and hence
$f=L+\overline{P}$
is harmonic function on \(\Omega\). Moreover, since \(\nu_{f_i}=\nu\) for every \(i\), we have $(P_i)_z=\nu (L_i)_z, \, i=1,\dots,n.$ Therefore,
\[
P_z=\sum_{i=1}^n a_i (P_i)_z
   =\sum_{i=1}^n a_i \nu (L_i)_z
   =\nu \sum_{i=1}^n a_i (L_i)_z
   =\nu L_z.
\]
Thus the complex dilatation of \(f\) is $\nu_f=\frac{P_z}{L_z}=\nu.$ Hence, the planar part of
\[
X=\left(\sum_{i=1}^n a_i f_i,\ \sum_{i=1}^n b_i h_i\right)
\]
has the same complex dilatation \(\nu\). Since the vertical component \(\sum_{i=1}^n b_i h_i\) is a harmonic function as a linear combination of harmonic functions, the map \(X\) satisfies the same conditions as in \eqref{Eq: immersion}. Consequently, \(X\) is a harmonic Enneper immersion on \(\Omega\).
\end{proof}

%===================================================================

\begin{remark}
If we choose $b_i = a_i$ for all $i$, then this shows that harmonic Enneper immersions form a real vector space, provided they have the same complex dilatation. It is also important to note that there exist examples where the complex dilatations satisfy $\nu_{f_j} \neq \nu_{f_k}$ for some harmonic immersion $X_j$ and $X_k$, yet their sum $X = X_j + X_k$ is still a harmonic immersion; see, for instance, Theorem \ref{minimal and maximal}.
\end{remark}

 \begin{theorem}\cite{Lewy1936}\label{theorem univalent}
The harmonic function $f=L+\overline{P}$ is locally univalent and orientation preserving on $\Omega$ if and only if $|\nu_f(z)|<1,\text{for all } z\in\Omega.$
\end{theorem}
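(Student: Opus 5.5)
To prove Theorem~\ref{theorem univalent}, I would first compute the Jacobian of $f=L+\overline{P}$. Since $\partial_z \overline{P}=0$ and $\partial_{\bar z}\overline{P}=\overline{P_z}$, we have $f_z=L_z$ and $f_{\bar z}=\overline{P_z}$. The real Jacobian of $f$, viewed as a map $\mathbb{R}^2\to\mathbb{R}^2$, is therefore
\[
J_f=|f_z|^2-|f_{\bar z}|^2=|L_z|^2-|P_z|^2 .
\]
Wherever $L_z\neq 0$, we have $\nu_f=\overline{f_{\bar z}}/f_z=P_z/L_z$, and so $J_f=|L_z|^2\bigl(1-|\nu_f|^2\bigr)$. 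The plan is to show that each side of the equivalence amounts to $J_f>0$ on all of $\Omega$.

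For the direction ($\Leftarrow$), assume $|\nu_f|<1$ on $\Omega$. Here I read the hypothesis, as the definition of $\nu_f$ implicitly requires, as including $L_z\neq0$. Then $J_f>0$ everywhere. The inverse function theorem applies because $f$ is smooth, being harmonic. It shows that $f$ is a local diffeomorphism, hence locally univalent, and positivity of the Jacobian gives orientation preservation.

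For the direction ($\Rightarrow$), assume $f$ is locally univalent and orientation preserving. Then $J_f\ge 0$, and the whole difficulty is to rule out $J_f=0$ at some point. This is exactly Lewy's theorem, which I expect to be the main obstacle. I would argue by contradiction.

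Suppose $J_f(z_0)=0$ and normalise $z_0=0$, $f(0)=0$. Then $|L'(0)|=|P'(0)|$. If both derivatives vanish, expand $L$ and $P$ to their first nonvanishing orders. The leading homogeneous part of $f$ is then a harmonic polynomial of degree at least $2$. Such a map is at least $2$-to-$1$ near the origin, which I would show by an argument-principle or degree count, contradicting local univalence.

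If $L'(0)=a\neq0$, then $|P'(0)|=|a|$. After composing with an affine map, the linear part of $f$ becomes $z+e^{i\alpha}\bar z$, which collapses a line. One then examines the higher-order terms along the degenerate direction. Following Lewy's argument, I would study the real-valued harmonic function $\operatorname{Re}\bigl(e^{-i\alpha/2}f\bigr)$, or a suitable rotated component $u$. Its gradient is aligned with that of the other component $v$ at the origin. A harmonic function whose differential, combined with the other component's, degenerates has level curves through the origin that split into at least four arcs, because harmonic functions have saddle-type critical behaviour. Combining this with the level structure of the other component yields two distinct preimages of points near $f(0)$, again contradicting univalence.

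Making this final step rigorous, in particular showing that $u$ and $v$ can be arranged so that a linear combination has a critical point at the origin, is the delicate part. Once it is done, $J_f>0$ on $\Omega$, so $L_z\ne0$ and $|\nu_f|<1$.
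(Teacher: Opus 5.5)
The paper gives no proof of this statement; it is quoted from Lewy (1936), so your proposal has to stand on its own. Your Jacobian identity and the direction ($\Leftarrow$) are correct. You are also right that $L_z\neq0$ must be read into the hypothesis: for $L=z^2$, $P=0$ one has $\nu_f\equiv0$, yet $f$ is not locally univalent at $0$.

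The direction ($\Rightarrow$) is the whole content of Lewy's theorem, and it is not proved.
\begin{itemize}
\item In the case $L'(0)=P'(0)=0$, your degree count needs the leading homogeneous part $g$ to satisfy $|g(z)|\ge c|z|^k$ on small circles. This fails if $L$ and $P$ vanish to the same order $m$ with leading coefficients of equal modulus. Then $g$ is a constant multiple of $\operatorname{Re}(e^{i\gamma}z^m)$, which maps into a line and vanishes on $2m$ rays, so it determines nothing about the degree of $f$. This subcase is as hard as your second case.
\item In the second case you stop exactly at the decisive step. The step you call delicate is not a difficulty. $J_f(z_0)=0$ says the real $2\times2$ matrix with rows $\nabla u(z_0)$, $\nabla v(z_0)$ is singular. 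So some $(s,t)\neq(0,0)$ gives $\nabla(su+tv)(z_0)=0$.
\item In your normalisation the right function is $\operatorname{Im}(e^{-i\alpha/2}f)$, not the real part, which has nonzero gradient at the origin.
\end{itemize}

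What is missing is the conclusion. It needs neither your case split nor the level structure of a second component. Write $w=su+tv=\operatorname{Re}F$ with $F$ holomorphic on a disc around $z_0$ on which $f$ is injective.
\begin{itemize}
\item If $w$ is constant, $f$ maps the disc into a line. This is impossible for an injective continuous map, since a small circle would embed in a line.
\item Otherwise $F(z)-F(z_0)=c(z-z_0)^n\bigl(1+O(z-z_0)\bigr)$ with $c\neq0$ and $n\ge2$. So near $z_0$ the level set $\{w=w(z_0)\}$ consists of $2n\ge4$ analytic arcs issuing from $z_0$, pairwise disjoint apart from $z_0$.
\item On this level set $f$ takes values in the single line $\ell=\{\zeta\in\mathbb{C}: s\operatorname{Re}\zeta+t\operatorname{Im}\zeta=w(z_0)\}$ through $f(z_0)$.
\item By injectivity, each arc with $z_0$ removed is mapped onto a nondegenerate interval. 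That interval lies in one of the two components of $\ell\setminus\{f(z_0)\}$ and has $f(z_0)$ in its closure, so it contains an initial segment of that half-line.
\item There are at least four arcs and only two half-lines, so two images overlap. This contradicts injectivity.
\end{itemize}
Hence $J_f\neq0$, so $J_f>0$ by orientation preservation, which gives $L_z\neq0$ and $|\nu_f|<1$. This is Lewy's argument. It treats both of your cases at once, so the expansion into leading homogeneous terms can be dropped entirely.
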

A surface of the form $X = \{(x,y,\phi(x,y)) \mid (x,y) \in \Omega\}$ is called a graph over $\Omega$. We call an immersion $X = (f,h)$ a \emph{harmonic graph} if $f: \Omega \to \mathbb{C}$ is an orientation-preserving harmonic univalent mapping. One advantage of this representation is that the univalence of the harmonic function $f$ ensures that the associated harmonic Enneper immersion defines a graph over the image of $f(\Omega)$.
It follows immediately from the Theorem \cite[Proposition 3.3]{Vasu2026} and the above Theorem \ref{theorem univalent} that the following result holds.

\begin{prop}
Let $f = L + \overline{P}: \Omega \to \mathbb{C}$ be an orientation-preserving harmonic univalent mapping of a domain $\Omega$ onto a domain $D \subset \mathbb{C}$. Suppose that $h: \Omega \to \mathbb{R}$ is a harmonic function. Then the map $X: \Omega \to \mathbb{C} \times \mathbb{R}$ defined by $X(z) = (L(z) + \overline{P(z)},\, h(z))$ defines a harmonic graph.
\end{prop}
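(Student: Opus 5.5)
The plan is to check two things separately: first that $X=(f,h)$ is a harmonic Enneper immersion, and second that its image is a graph over $D=f(\Omega)$. The paper points to \cite[Proposition 3.3]{Vasu2026} and Theorem \ref{theorem univalent} for these.

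\emph{Harmonicity.} The components of $X$ are $\operatorname{Re} f$, $\operatorname{Im} f$ and $h$. The first two are harmonic because $f=L+\overline{P}$ with $L,P$ holomorphic, and $h$ is harmonic by hypothesis. So $\Delta X=0$.

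\emph{Immersion.} Write $\nu_f=P_z/L_z$, the analytic dilatation, as in the proof of the Superposition principle (Theorem \ref{Superposition principle}). Since $f$ is orientation-preserving and locally univalent, Theorem \ref{theorem univalent} gives $|\nu_f|<1$ on $\Omega$.
\begin{itemize}
\item This requires $L_z\neq 0$ everywhere. Otherwise $|\nu_f|<1$ would be meaningless, and if both $L_z$ and $P_z$ vanished at a point, the Jacobian $|L_z|^2-|P_z|^2$ of $f$ would vanish there.
\item Hence $|P_z|<|L_z|$ on $\Omega$, which is exactly the second condition in \eqref{Eq: immersion}.
\end{itemize}
For the first condition, I simply define $\mathfrak{H}:=L_zP_z+(h_z)^2$. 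This is holomorphic, because $h_z$ is holomorphic whenever $h$ is real harmonic, so the first condition holds by definition. Concretely, $X$ is an immersion because the Jacobian of the projection $(X_1,X_2)=f$ is $|L_z|^2-|P_z|^2>0$. Then $X_x$ and $X_y$ are linearly independent, since their first two components already are. So $X$ is a harmonic Enneper immersion.

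\emph{Graph property.} Since $f:\Omega\to D$ is a bijection, it has an inverse $f^{-1}:D\to\Omega$, which is smooth because the Jacobian of $f$ is nonzero. Define $\phi:=h\circ f^{-1}:D\to\mathbb{R}$. Then
\[
X(\Omega)=\{(w,\phi(w)) : w\in D\},
\]
so $X(\Omega)$ is the graph of $\phi$ over $D$. Together with the previous steps, this shows $X$ is a harmonic graph in the sense of the definition.

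The main obstacle is the nondegeneracy point: deducing $|L_z|\neq|P_z|$ everywhere, and not merely almost everywhere. This rests entirely on Theorem \ref{theorem univalent}, that is, Lewy's theorem, which rules out the vanishing of the Jacobian of a univalent harmonic map. I would invoke it directly rather than reprove it.
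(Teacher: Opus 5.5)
Your proposal is correct and follows the same route as the paper, which proves the statement only by citing \cite[Proposition 3.3]{Vasu2026} together with Lewy's theorem (Theorem \ref{theorem univalent}). You unpack those citations: Lewy's theorem gives a positive Jacobian $|L_z|^2-|P_z|^2$, which yields $|P_z|<|L_z|$ directly (so the remark about $\nu_f$ being ``meaningless'' is unnecessary); this makes $X$ an immersion through its planar projection, and univalence then exhibits $X(\Omega)$ as the graph of $h\circ f^{-1}$ over $D$.
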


%===================================================================

%===================================================================
As a direct consequence of the Superposition principle in Theorem \ref{Superposition principle} and the previous proposition, we obtain the following superposition result for harmonic graphs.

\begin{cor}
For each $i=1,\dots,n$, let $f_i: \Omega \to \mathbb{C}$ be orientation-preserving harmonic univalent mappings with the same complex dilatation, and let $h_i: \Omega \to \mathbb{R}$ be harmonic functions. Let $a_i, b_i \in \mathbb{R}$ be constants such that  $\sum_{i=1}^n a_i f_i \neq 0$. Then the map
\[
X=\sum_{i=1}^n X_i
:=\left(\sum_{i=1}^n a_i f_i,\ \sum_{i=1}^n b_i h_i\right)
\]
defines a harmonic graph on $\Omega$.
\end{cor}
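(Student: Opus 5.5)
The plan is to reduce the corollary to Theorem \ref{Superposition principle} and then verify the extra property that makes the result a graph, namely that $f:=\sum_{i=1}^n a_i f_i$ is an orientation-preserving univalent harmonic map. First, write $f_i=L_i+\overline{P_i}$. Since each $f_i$ is orientation-preserving and univalent, Theorem \ref{theorem univalent} gives $|\nu|<1$ on $\Omega$ and $(L_i)_z\neq 0$. Hence $|(L_i)_z|\neq|(P_i)_z|$, so each $X_i=(f_i,h_i)$ is a harmonic Enneper immersion. Theorem \ref{Superposition principle} then applies directly. It shows that $X=(L+\overline{P},\sum b_ih_i)$, with $L=\sum a_iL_i$ and $P=\sum a_iP_i$, is a harmonic Enneper immersion whose planar part has dilatation $\nu_f=\nu$, exactly as in the proof of that theorem.

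Second, I would obtain local univalence and orientation. Since $P_z=\nu L_z$ and $|\nu|<1$, we have $|P_z|<|L_z|$ wherever $L_z\neq 0$. At such points the Jacobian $|L_z|^2-|P_z|^2$ is positive. I would invoke the hypothesis $\sum a_if_i\neq 0$ (read as nondegeneracy of the combination) to exclude $L_z\equiv 0$. Zeros of $L_z$ are isolated. At a zero of $L_z$ we also get $P_z=0$, so the map fails to be an immersion there; such points must be excluded by assumption or shown not to occur. Granting this, Theorem \ref{theorem univalent} makes $f$ locally univalent and orientation-preserving on $\Omega$.

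Third, and this is the main obstacle, I need global univalence of $f$, so that the preceding proposition yields a graph over $f(\Omega)$. Equal dilatations alone do not obviously force this, so I would argue via the shear construction. From $P_z=\nu L_z$, the map $f$ is univalent if and only if the analytic functions $L-e^{i\alpha}P$ behave well, in the sense of the Clunie--Sheil-Small shear theorem. I would first treat $a_i\ge 0$ with all $f_i$ convex in a common direction $e^{i\alpha}$. Then the $L_i-e^{2i\alpha}P_i$ are univalent and convex in that direction, and nonnegative combinations preserve convexity in a direction. The shear theorem then transfers this to $f$. If this route fails in general, I would fall back on establishing the weaker conclusion that $X$ is a locally graphical harmonic immersion, that is, a graph over each small disc. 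This follows immediately from step two and the proposition applied locally.

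Finally, with $f$ univalent and orientation-preserving and $\sum b_ih_i$ harmonic, the proposition preceding the corollary shows that $X$ is a harmonic graph over $f(\Omega)$.
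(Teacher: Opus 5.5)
There is a genuine gap, and it cannot be closed, because the statement is false as written. You correctly locate the trouble: nonvanishing of $L_z=\sum a_i(L_i)_z$ in your second step, and global univalence of $f=\sum a_if_i$ in your third. Neither follows from the hypotheses.

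Take $\Omega=\mathbb{D}$, $f_1(z)=\frac{z}{1-z}$, $f_2(z)=\frac{z}{1-iz}$ and $a_1=a_2=\tfrac12$. Both $f_i$ are analytic, so $\nu\equiv 0$ for both. Both are orientation-preserving and univalent, mapping $\mathbb{D}$ onto the half-planes $\{\operatorname{Re}w>-\tfrac12\}$ and $\{\operatorname{Im}w<\tfrac12\}$. But
\[
f'(z)=\tfrac12\bigl((1-z)^{-2}+(1-iz)^{-2}\bigr)
\]
vanishes at $z_0=\tfrac{1-i}{2}\in\mathbb{D}$, since $(1-z_0)^2=\tfrac{i}{2}$ and $(1-iz_0)^2=-\tfrac{i}{2}$. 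Hence $f$ is not univalent near $z_0$. Moreover, with $h=\sum b_ih_i$, the vectors $X_x(z_0)=(0,h_x(z_0))$ and $X_y(z_0)=(0,h_y(z_0))$ are linearly dependent for every choice of $h_i,b_i$, so $X$ is not even an immersion. Replacing $f_i$ by $f_i+5$ and $\Omega$ by $\{|z|<0.8\}$ makes $\sum a_if_i$ zero-free, so no reading of the hypothesis $\sum a_if_i\neq0$ helps.

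The paper's own justification (the superposition principle followed by the preceding proposition) has exactly this hole. The proposition is applied without knowing that $\sum a_if_i$ is univalent. The superposition principle only establishes $P_z=\nu L_z$, which gives $|P_z|\neq|L_z|$ only where $L_z\neq0$. Your first step quotes that theorem as already yielding an immersion, so it inherits the error; you then rightly contradict it in step two.

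The same example refutes the key claim of your third step. Here $P_i\equiv0$, so the shears $L_i-e^{2i\alpha}P_i=f_i$ are univalent and convex in every direction. Yet their nonnegative combination is not even locally univalent, so convexity in a common direction is not preserved under nonnegative combinations. What is preserved are criteria with fixed parameters that are linear in $F'$, such as the Royster--Ziegler condition with its parameters fixed. An example is $\operatorname{Re}\bigl((1-z^2)F'(z)\bigr)>0$, which forces univalence and convexity in the vertical direction. If you require all the shears $L_i+P_i$ to satisfy one common condition of this kind, with $a_i\ge0$, your shear argument works. Strict positivity then also gives $L_z(1+\nu)\neq0$, hence $L_z\neq0$, which is the local univalence the shear theorem needs.

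Your fallback to local graphicality fails as well, since it rests on step two, which breaks at $z_0$. A correct corollary needs an extra hypothesis, for instance that $\sum a_i(L_i)_z$ never vanishes and $\sum a_if_i$ is univalent. With that hypothesis the result is immediate from the proposition, and this is what the paper tacitly assumes.
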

It is worth noting that the map $X = \left( z,\ \sum_{i=1}^n b_i h_i \right)$
defines a harmonic graph on $\Omega$, where each $h_i : \Omega \to \mathbb{R}$ is harmonic function.

Minimal and maximal surfaces are closely related to harmonic immersions, as their coordinate functions are harmonic in conformal parametrisations. We briefly recall their Weierstrass representations. A minimal surface in $\mathbb{R}^3$ can locally be represented in terms of holomorphic data $(F, G)$, where $F$ is a holomorphic function and $G$ is a meromorphic function, via the Weierstrass-Enneper representation, as in equation \eqref{minimal decomposition} (see \cite{Osserman1986} for details). Similarly, maximal surfaces in the Lorentz--Minkowski space $\mathbb{L}^3$ admit an analogous representation in terms of holomorphic data (see \cite{Kobayashi1983}). 

The following result shows that minimal and maximal surfaces admit a decomposition in terms of harmonic immersions.

\begin{theorem}\label{minimal and maximal}
A minimal surface in $\mathbb{R}^3$ or a maximal surface in $\mathbb{L}^3$ can be written as a superposition of two harmonic immersions in $\mathbb{R}^3$.
\end{theorem}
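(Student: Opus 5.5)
We plan to use the Weierstrass--Enneper representation directly. A minimal surface has, locally on a simply connected domain $\Omega$, the form
\[
X(z)=\operatorname{Re}\int^z \Bigl(\tfrac12 F(1-G^2),\ \tfrac{i}{2}F(1+G^2),\ FG\Bigr)\,d\zeta .
\]
A maximal surface in $\mathbb{L}^3$ has the analogous form with $1-G^2,\,1+G^2$ replaced by $1+G^2,\,1-G^2$, up to signs. In the identification $\mathbb{R}^3\equiv\mathbb{C}\times\mathbb{R}$, we combine the first two coordinates into one complex function. Writing $x_1+ix_2$, the minimal case gives
\[
x_1+ix_2=\overline{\int F\,d\zeta}\;-\;\int FG^2\,d\zeta ,
\]
up to the normalisation $\tfrac12$. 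Hence
\[
f=L+\overline{P},\qquad L=-\tfrac12\int FG^2,\qquad P=\tfrac12\int F,
\]
and the third coordinate is $h=\operatorname{Re}\int FG$.

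Next we split the planar part into its holomorphic and antiholomorphic pieces and assign the height to one of them. The natural choice is
\[
X_1=(L,\ \tfrac12 h),\qquad X_2=(\overline{P},\ \tfrac12 h).
\]
Alternatively, we take $X_1=(L,h)$ and $X_2=(\overline P,0)$. Each piece is then a harmonic map, since a holomorphic or antiholomorphic function has harmonic real and imaginary parts, and $X=X_1+X_2$ holds by construction. In the maximal case the same bookkeeping gives $f=L-\overline{P}$, or $L+\overline{P}$ after relabelling signs, so the identical split applies. These two summands have complex dilatations $0$ and $\infty$ respectively. This is the example promised in the remark following Theorem~\ref{Superposition principle}: the sum is an immersion even though the dilatations differ.

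The step we expect to be the main obstacle is verifying that each summand is itself an \emph{immersion}, via condition \eqref{Eq: immersion}. For $X_1=(L,h)$ we need $|L_z|\neq 0$, that is $FG^2\neq 0$. This fails at zeros of $G$ and is vacuous if $G$ is constant, for instance on a plane. For $X_2=(\overline P,h)$ we need $P_z=\tfrac12F\neq0$, which holds wherever the Weierstrass data are regular. To handle the failure of $X_1$, we first try reassigning the height: we put all of $h$ with whichever planar piece is degenerate. Since $(h_z)^2=\tfrac14F^2G^2$, the piece $(0,h)$ alone still fails to be an immersion, so instead we check the rank directly. The differential of $(L,h)$ has rank $2$ iff $L_z$ and $h_z$ are not simultaneously degenerate. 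Here both involve $G$, so zeros of $G$ remain problematic.

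The fallback is to use the freedom of a rigid motion and of the choice of domain. We rotate the surface so that $G$ avoids $0$ and $\infty$ on a neighbourhood. This is possible locally, since the Gauss map is nonconstant or can be rotated off the vertical. For a plane, which has constant $G$, we rotate so that $G\equiv c\neq0$, and then both pieces are immersions. We then state the theorem locally, away from the discrete set where $G\in\{0,\infty\}$ after rotation. With this, $X_1$ and $X_2$ are harmonic immersions and $X=X_1+X_2$, which proves the claim in both the minimal and maximal settings.
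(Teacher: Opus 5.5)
Your proposal is correct and follows essentially the paper's argument. The paper also splits the Weierstrass--Enneper integrand so that $X_{min}=X_1+AX_2$ with $A=\mathrm{diag}(-1,1,1)$: $X_1$ carries the holomorphic planar piece, $AX_2$ the antiholomorphic one, and each carries half of the height. This is exactly your $X_1=(L,\tfrac12 h)$, $X_2=(\overline P,\tfrac12 h)$, up to the normalisation of the Weierstrass data, and the paper then checks \eqref{Eq: immersion} for each piece.

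You go further in one respect. At the points where $G\in\{0,\infty\}$, $FG^2$ (resp.\ $F$) vanishes and the pieces degenerate, and the paper's proof passes over this silently. You handle it by working locally after a rigid motion, which in the maximal case must be a Lorentz isometry rather than a Euclidean rotation, and then pulling the decomposition back through the inverse linear map.
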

\begin{proof}

 Let $X_{min}$ be a minimal surface given by the Weierstrass data $(F, G)$. Then
\begin{equation}\label{minimal decomposition}
\begin{aligned}
X_{min} &= \operatorname{Re} \int^z 
\begin{pmatrix}
1-G^2 \\
-i(1+G^2) \\
2G
\end{pmatrix}
F\,dz \\
&= \operatorname{Re} \int^z 
\begin{pmatrix}
1 \\
-i \\
G
\end{pmatrix}
F\,dz
+ \operatorname{Re} \int^z 
\begin{pmatrix}
-1 \\
-i \\
\frac{1}{G}
\end{pmatrix}
F G^2\,dz \\
&= \operatorname{Re} \int^z 
\begin{pmatrix}
1 \\
-i \\
G
\end{pmatrix}
F\,dz
+ A\, \operatorname{Re} \int^z 
\begin{pmatrix}
1 \\
-i \\
\frac{1}{G}
\end{pmatrix}
F G^2\,dz
\end{aligned}
\end{equation}
where $A = \mathrm{diag}(-1,1,1)$. Let $X_1$ and $A X_2$ denote the first and second terms in \eqref{minimal decomposition}, respectively, so that $X_{min} = X_1 + A X_2.$ In the harmonic Enneper representation of $X_1$, we have $(L_z)_1 = F$ and $(P_z)_1 = 0$. Similarly, for $X_2$, we have $(L_z)_2 = 0$ and $(P_z)_2 = -F G^2$. These satisfy \eqref{Eq: immersion}; hence, $X_1$ and $X_2$ are harmonic immersions.

We can similarly show that, for a maximal surface $X_{max}$ with Weierstrass data $(F,G)$, we have $X_{max} = X_1 + B X_2,$ where $B = \mathrm{diag}(1,-1,1)$. Hence the proof.
    
\end{proof}

In the following sections, we explore applications of the above framework, showing how the superposition of harmonic surfaces gives rise to a variety of geometrically and physically significant surfaces.

\section{Harmonic Graphs via Analytic Functions} 
A harmonic graph \( X(z) = (z, h(z)) \) can be naturally generated from the real or imaginary part of an analytic function, where \( h(z) \) denotes either the real or imaginary part of an analytic function defined on a domain \( \Omega \subset \mathbb{C} \).

As a simple example, consider the analytic function \( \log z \), defined on a simply connected domain \( \Omega \subset \mathbb{C} \setminus \{0\} \) with a fixed branch. Then
\[
\log z = \ln|z| + i \arg z,
\]
and both \( \ln|z| \) and \( \arg z \) are harmonic functions on \( \Omega \). Hence, for real constants \( a, b \in \mathbb{R} \), we may define a harmonic Enneper immersion by taking \( L(z) = z \), \( P(z) = 0 \), and \( h(z) = a + b \ln|z| \), which yields
\begin{equation}
    X(z) = \bigl( z,\ a + b \ln|z| \bigr).
\end{equation}

This equation describes the membrane shape in the outer region, away from localised forces, where bending effects become significant (see \cite{Powers2002}).
Another fundamental example is given by
\begin{equation}\label{eq::SingleHelicoid}
X(z) = \bigl(z, a \arg(z - z_0)\bigr),
\end{equation}
which represents a helicoid of pitch \( a \) centred at \( z_0 \). It is well known that helicoids and planes are the only minimal surfaces that can be expressed as graphs of harmonic functions. 

A key advantage of this representation is that, due to the linearity of the Laplacian, harmonic functions can be superposed, allowing the construction of more complex surfaces by combining simpler building blocks, which we refer to as \emph{helical motifs}.

\section{Superposition of Finitely Many Helical Motifs}

We begin by analysing the superposition of two helical motifs. Let two helicoids of pitches \( a_1 \) and \( a_2 \) be centered at points \( z_1 \) and \( z_2 \), respectively, with separation \( R = |z_2 - z_1| > 0 \). The associated harmonic function of the harmonic graph is
\[
h(z) = a_1 \arg(z - z_1) + a_2 \arg(z - z_2).
\]
To obtain a periodic layered structure, the pitches must satisfy \( a_1 = a_2 \). For simplicity, we first consider the symmetric case \( a_1 = a_2 = a \). At large distances, the resulting surface is well approximated by two intertwined helicoids of pitch \( 2a \), displaced along their axes. A particularly interesting configuration arises in the dipole case, where the pitches are opposite, namely \( +a \) and \( -a \), with centres located at \( z_1 = \tfrac{R}{2} \) and \( z_2 = -\tfrac{R}{2} \). This regime is relevant in physical applications such as those discussed in \cite{Terasaki2013, Bussi2019}.

More generally, for \( n \) helical motifs with pitches \( a_1, \dots, a_n \) located at \( z_1, \dots, z_n \), we consider the harmonic function
\begin{equation}\label{eq::hOfManyMotifs}
    h(z) = \sum_{k=1}^n a_k \,\mathrm{Im}[\ln(z - z_k)] = \sum_{k=1}^n a_k \arg(z - z_k).
\end{equation}

\section{The multipole expansion in two-dimensional electrostatics}
\label{Sect::MultipoleExpansion}

The construction of a harmonic graph associated with a harmonic function whose singularities can be interpreted, by analogy with electrostatics, as charges. This suggests using a multipole expansion in which helical motifs serve as point charges. In two-dimensional electrostatics, the potential of a point charge has a logarithmic singularity
\[
V(r,\theta)=V(r)=\ln\frac{1}{r},
\]
whereas in our setting, the appropriate singularity is angular,
\[
h(r,\theta)=h(\theta)=\theta.
\]
Up to constants, $h=\theta$ is the only purely angular harmonic function in two dimensions, just as $h=\ln r$ is the only purely radial one.

Let $\mu(x,y)$ be a charge density supported in the disc
\[
D_R=\{(x,y):\sqrt{x^2+y^2}\le R\}.
\]
Then the electrostatic potential $V$ is harmonic function outside $D_R$, so $\Delta V=0$ in $\mathbb{R}^2\setminus D_R$, while inside $D_R$ one has $\Delta V=2\pi\mu$. (We use a sign convention that is more convenient for our later generalisation.) For $r>R$, $V$ admits the multipole expansion \cite{Joslin1983}
\begin{equation}
    V=p\ln r+\sum_{k=1}^{\infty}\frac{1}{r^k}[a_k\cos(k\theta)+b_k\sin(k\theta)]
    =\operatorname{Re}\left(p\ln z+\sum_{k=1}^{\infty}\frac{c_k}{z^k}\right),
\end{equation}
where $c_k=a_k+i b_k\in\mathbb{C}$ and $p\in\mathbb{R}$. The coefficients are
\begin{equation}\label{eqCoefMultpExpElectroPotential}
    p=\int_{\mathbb{R}^2}\mu\,d xd y,
    \qquad
    c_k=-\frac{1}{k}\int_{\mathbb{R}^2}\mu z^k\,d xd y.
\end{equation}
Taking the imaginary part gives the corresponding expansion for a helicoidal charge density $\mu$:
\begin{equation}\label{EqMultipoleExpHelicCharge}
    h=p\,\theta+\sum_{k=1}^{\infty}\frac{1}{r^k}[b_k\cos(k\theta)-a_k\sin(k\theta)]
    =\operatorname{Im}\left(p\ln z+\sum_{k=1}^{\infty}\frac{c_k}{z^k}\right),
\end{equation}
with the same coefficients as in \eqref{eqCoefMultpExpElectroPotential}.

For a single helicoid of pitch $p_0$ located at $0\in\mathbb{C}$, the harmonic function is
\[
h(r,\theta)=p_0\,\theta=p_0\,\operatorname{Im}\ln z.
\]
Thus, a helicoidal charge $p_0$ at the origin corresponds to the distribution $\mu=p_0\,\delta(z)$, for which $c_k=0$. If instead the charge $p_0$ is located at $z_0\in\mathbb{C}$, then
\[
p_0\ln(z-z_0)=p_0\ln z+p_0\ln\left(1-\frac{z_0}{z}\right)
=p_0\ln z-\sum_{k=1}^{\infty}\frac{p_0}{k}\left(\frac{z_0}{z}\right)^k,
\]
and therefore
\begin{equation}
    h(z)=p_0\,\theta+\sum_{k=1}^{\infty}\frac{p_0}{k}\left(\frac{r_0}{r}\right)^k\sin k(\theta-\theta_0).
\end{equation}
This agrees with \eqref{EqMultipoleExpHelicCharge} for the distribution $\mu=p_0\delta(z-z_0)$, since
\begin{equation}
    p_0=\int p_0\delta(z-z_0),
    \qquad
    c_k=-\frac{1}{k}\int p_0 z^k\,\delta(z-z_0)
    =-\frac{p_0 z_0^k}{k},
\end{equation}
so that
\[
h(z)=\operatorname{Im}\left[p_0\ln z-\sum_{k=1}^{\infty}\frac{p_0}{k}\left(\frac{z_0}{z}\right)^k\right].
\]

More generally, for helicoidal charges $\{p_j\}_{j=1}^N$ located at $\{z_j\}_{j=1}^N\subset\mathbb{C}$, the charge density is
\[
\mu=\sum_{j=1}^N p_j\,\delta(z-z_j).
\]
The multipole coefficients are then
\begin{equation}\label{eq::Coeff2dMultipoleExp}
    p=\sum_{j=1}^N p_j,
    \qquad
    c_k=-\frac{1}{k}\sum_{j=1}^N p_j z_j^k.
\end{equation}
Hence, the associated harmonic function is
\begin{equation}
    h(r,\theta)=\Big(\sum_{j=1}^N p_j\Big)\theta
    +\sum_{k\geq1}\sum_{j=1}^N\frac{p_j}{k}\left(\frac{r_j}{r}\right)^k\sin k(\theta-\theta_j).
\end{equation}
This series converges absolutely for $r>\max_j r_j$, that is, outside the smallest disc containing all charges.

The expansion shows that the fundamental layer of a finite collection of helical motifs behaves asymptotically like a single helicoid whose pitch is equal to the sum of the individual pitches. This description is valid only for finite collections of motifs that are suitably arranged; in particular, infinite configurations or motifs that are not perpendicular to the layers are not captured by this expansion.

\section{Twist grain boundary}
The twist grain boundary (TGB) phase in smectic-A liquid crystals consists of an ordered array of screw dislocations (helical motifs). In this setting, the layers are organised as a linear chain of infinitely many helical motifs. Such structures have been extensively investigated in the study of smectic liquid crystals (see \cite{Santangelo2006, Santangelo2007}). 

We begin by considering a single TGB modelled as a row of screw dislocations aligned along the \( x \)-axis and separated by a uniform distance \( d \). Within the small-slope approximation, the layered structure can be described as level sets of an appropriate phase field 
\[
\phi(x_1,x_2,x_3) = \lambda x_3-\frac{b}{2\pi}\mbox{Im}\ln\sin\left[\frac{\pi(x_1+i x_2)}{d}\right].
\]
Equivalently, individual layers may be identified with the graph of the harmonic function 
\[
h(z) = \frac{b}{2\pi\lambda}\mbox{Im}\ln\sin\left(\frac{\pi z}{d}\right).
\] 
In typical TGB configurations, the helical motifs are assumed to be well separated relative to their pitch. Moreover, TGB admit an exact geometric description in terms of the doubly periodic Scherk’s first minimal surface. It can also be shown that such surfaces may be interpreted as arising from an infinite superposition of helicoid contributions (see \cite{Kamien2001}).

A \( \pi/2 \) TGB phase is formed by an arrangement of twist grain boundaries in which the screw dislocations alternate along the \( \hat{z} \) and \( \hat{x} \) directions, with \( \hat{y} \) taken as the pitch axis, resulting in an overall rotation of \( \pi/2 \). The level sets of a $\pi/2$ TGB phase by summing parallel, alternating TGBs separated by a distance $d$.  This structure can also be derived using infinite product representations of the Jacobi elliptic function \( \operatorname{sn}(u,k) \)(see \cite{Santangelo2007}). In this framework, the exact superposition of screw dislocations for a \( \pi/2 \) TGB configuration can be described in terms of the level sets of the corresponding phase field:
\begin{equation}
\Phi_{\mathrm{TGB}}(\mathbf{x}) = \gamma z - \frac{b}{2\pi}\,\operatorname{Im}\ln \operatorname{sn}(\theta x + i\psi y, k),
\tag{18}
\end{equation}
where $\theta$ and $\psi$ are the necessary scale factors,
\[
K(k)=\int_0^1 dx\left[(1-x^2)(1-k^2x^2)\right]^{-1/2}
\]
is an elliptic period,
\[
iK'(k)=\int_1^{1/k} dx\left[(1-x^2)(1-k^2x^2)\right]^{-1/2}
= iK\bigl(\sqrt{1-k^2}\bigr)
\]
is the other elliptic period, and $k$ is the elliptic modulus. The harmonic function corresponding \( \pi/2 \) TGB phase field is 
\[
h(z) = \frac{b}{2\pi\lambda}\mbox{Im}\ln\operatorname{sn}\left(\theta x + i\psi y, k\right).
\] 

 By analogy with a single grain boundary, which is topologically equivalent to Scherk’s first minimal surface, the resulting structure is likewise related to classical minimal surfaces. In particular, it is topologically equivalent to the triply periodic Schwarz \( D \) surface, another well-known minimal surface (see \cite{Santangelo2007}).

We now consider the construction of an Untwisted
grain boundary (UtGB) configuration as a harmonic graph. The associated harmonic function is given by
\begin{equation}
 h(z)=p\,\mathrm{Im}\!\left(\ln \sin \frac{\pi z}{2d}\right)
      -p\,\mathrm{Im}\!\left(\ln \cos \frac{\pi z}{2d}\right),
\end{equation}
where \( p \) and \( -p \) denote the pitches of the helical motifs forming each dipole pair, separated by a distance \( d \).

\section{Conclusion}
In this work, we established a superposition principle for harmonic immersions with matching complex dilatations. This principle helps examine the non-linear geometric interactions between helical motifs in lamellar structures, which is necessary for understanding inter-motif interactions at distances comparable to their pitch. The harmonic Enneper representation and its associated superposition principle provide a constructive recipe that allows us to examine arbitrary arrangements of helical motifs at any separation and scale, with potential applications in soft matter physics, membrane biophysics, and geometric design.
\section{Acknowledgement}
The author gratefully acknowledges Dr Rahul Kumar Singh for their valuable comments. The author also acknowledges the financial support from the University Grants Commission of India (UGC) under the UGC-JRF scheme (Beneficiary Code: BININ04008604).

\bibliography{Priyank}
\bibliographystyle{ieeetr}

\end{document}